\documentclass[reqno, 12pt]{amsart}

\pdfoutput=1

\usepackage{enumerate}
\usepackage{latexsym}
\usepackage[centertags]{amsmath}
\usepackage{amsfonts}
\usepackage{amssymb}
\usepackage{amsthm}
\usepackage{newlfont}
\usepackage{graphics}
\usepackage{color}
\usepackage{float}
\usepackage{diagbox}
\textwidth 480pt \hoffset -60pt \textheight 9in \voffset -30pt
\parindent 8mm
\parskip 2mm

\usepackage{hyperref}

\usepackage{longtable}
\usepackage{rotating}
\usepackage{multirow}

\usepackage{extarrows}

\usepackage[sort,compress,numbers]{natbib}

\usepackage[utf8]{inputenc}

\newtheorem{thm}{Theorem}[section]
\newtheorem{cor}[thm]{Corollary}
\newtheorem{lem}[thm]{Lemma}
\newtheorem{prop}[thm]{Proposition}

\theoremstyle{mydefinition}

\theoremstyle{myremark}

\newtheorem{exa}[thm]{Example}

\newtheorem{prob}[thm]{Open problem}
\allowdisplaybreaks[4]


\title{On quotients of numerical semigroups for almost arithmetic progressions}

\author{Feihu Liu$^{1}$}

\address{ $^{1}$School of Mathematical Sciences,  Capital Normal University,
 Beijing 100048,  PR China}
\email{$^1$\texttt{liufeihu7476@163.com}}
\date{December 11, 2023}
\begin{document}
\maketitle

\begin{abstract}
Let $\langle A\rangle$ be the numerical semigroup generated by relatively prime positive integers $\{a_1,a_2,...,a_n\}$. The quotient of $\langle A\rangle$ with respect to a positive integer $p$ is defined by $\frac{\langle A\rangle}{p}=\{x\in \mathbb{N} \mid px\in \langle A\rangle\}$. The quotient $\frac{\langle A\rangle}{p}$ is known to be a semigroup but
is hard to study. When $p$ is a positive divisor of $a_1$, we reduce the computation of the Ap\'ery set of $\frac{a_1}{p}$ in $\frac{\langle A\rangle}{p}$ to a simple minimization problem. This allow us to obtain closed formulas of the Frobenius number of the quotient for some special numerical semigroups.
These includes the cases when $\langle A\rangle$ is the almost arithmetic progressions, the almost arithmetic progressions with initial gaps, etc. In particular, we partially solve an open problem proposed by A. Adeniran et al.
\end{abstract}

\def\D{{\mathcal{D}}}

\noindent
\begin{small}
 \emph{Mathematic subject classification (2020)}: Primary 20M14; Secondary 11D07, 11B75.
\end{small}

\noindent
\begin{small}
\emph{Keywords}: Quotient of a numerical semigroup; Almost arithmetic progression; Frobenius number; Genus.
\end{small}

\section{Introduction}

Let $\mathbb{Z}$, $\mathbb{N}$ and $\mathbb{P}$ be the set of all integers, non-negative integers and positive integers, respectively. A \emph{numerical semigroup} is a nonempty subset $S$ of $\mathbb{N}$ that is closed under the addition, $0\in S$, and $\mathbb{N}\setminus S$ is finite.

For a positive integers sequence $A=(a_1,a_2,...,a_n)$, $n\geq2$, if $\gcd(A)=1$, then the set
$$\langle A\rangle=\left\{a_1x_1+a_2x_2+\cdots +a_nx_n\ \mid  x_i\in \mathbb{N}\right\}$$
is a numerical semigroup (see \cite{J.C.Rosales}). We say that $A$ is a \emph{system of generators} of $\langle A\rangle$. This naturally attracts many scholars to study the following two invariants: 1). \emph{The Frobenius number} is denoted by $F(A)$ that is the greatest integer not belonging to $\langle A\rangle$. 2). \emph{The genus} (or \emph{Sylvester number}) is denoted by $g(A)$ that is the cardinality of $\mathbb{N}\backslash \langle A\rangle$.

Frobenius proposed the problem of giving a formula for $F(A)$. Sylvester \cite{J. J. Sylvester1,J. J. Sylvester2} obtained
\begin{equation}\label{FNGenusSylvester}
F(a_1,a_2)=a_1a_2-a_1-a_2\ \ \text{and}\ \ g(a_1,a_2)=\frac{1}{2}(a_1-1)(a_2-1).
\end{equation}
For $n\geq 3$, F. Curtis \cite{F.Curtis} proved that the $F(A)$ can not be given by closed formulas of a certain type. But many special cases have been considered, see \cite{Ramrez Alfonsn}.

In \cite{Rosales03}, a numerical semigroup $S(a,b,c)$ is called \emph{proportional modular} if
$S(a,b,c)=\{x\in \mathbb{N} \mid ax \mod b \leq cx\}$ and $a,b,c \in \mathbb{P}$. Suppose $\langle A\rangle$ is a numerical semigroup and $p\in \mathbb{P}$. A numerical semigroup $\frac{\langle A\rangle}{p}$ is called the \emph{the quotient of $\langle A\rangle$ by $p$} if $\frac{\langle A\rangle}{p}=\{x\in \mathbb{N} \mid px\in \langle A\rangle\}$. It is easy to verify that $S(a,b,c)$ and $\frac{\langle A\rangle}{p}$ are numerical semigroups (also see \cite{J.C.Rosales}). We can also get $\langle A\rangle \subseteq\frac{\langle A\rangle}{p}$ and $\frac{\langle A\rangle}{p}=\mathbb{N}$ if and only if $p\in\langle A\rangle$. In \cite{AMRobles08}, it is shown that a numerical semigroup is proportional modular if and only if it is the quotient of $\langle A\rangle$, where $A=(a,a+1), a\in \mathbb{P}$.

Suppose $A=(a_1,a_2)$, $\gcd(A)=1$ and $p\in \mathbb{P}$. A natural problem is to obtain the formulas for $F\left(\frac{\langle A\rangle}{p}\right)$ and $g\left(\frac{\langle A\rangle}{p}\right)$. In particular, it is an open problem to get the formulas of these two invariants (see \cite{MDelgado13}). Recently, some related developments can be found in \cite{F.Strazzanti,A.Adeniran,E.Cabanillas}.

We are concerned in this paper with the case $p\mid a_1$, i.e., the $p$ is a positive divisor of $a$. For $A=(a_1,a_2)$ and $\gcd(A)=1$, we can get $\frac{\langle A\rangle}{p}=\langle \frac{a_1}{p},a_2\rangle$ (see \cite{Rosales05}). By Equation \eqref{FNGenusSylvester}, we have
\begin{align*}
F\left(\frac{\langle a_1,a_2\rangle}{p}\right)=\frac{a_1a_2}{p}-\frac{a_1}{p}-a_2\ \ \text{and}\ \ g\left(\frac{\langle a_1,a_2\rangle}{p}\right)=\frac{1}{2p}(a_1-p)(a_2-1).
\end{align*}
The motivation of this paper comes from the paper \cite{A.Adeniran} that A. Adeniran et al. obtained the formulas of Frobenius number $F\left(\frac{\langle A\rangle}{p}\right)$ and the genus $F\left(\frac{\langle A\rangle}{p}\right)$ for $A=(a,a+d,a+2d)$, $A=(a,a+d,...,a+(a-1)d)$ under $\gcd(a,d)=1$ and $p\mid a$. They also proposed the following open problem:
\begin{prob}[\cite{A.Adeniran}]\label{OpenProb}
Let $A=(a,a+d,a+2d,...,a+kd)$, where $\gcd(a,d)=1$, $a,d\in\mathbb{P}$ and $3\leq k\leq a-2$. Find $F\left(\frac{\langle A\rangle}{p}\right)$ and $g\left(\frac{\langle A\rangle}{p}\right)$, as well as relationships between these quantities, for positive integers $p$.
\end{prob}

In this paper, we will partially solve the open problem, i.e, the case $p$ is a positive divisor of $a$. Furthermore, we will study the Frobenius formulas $F\left(\frac{\langle A\rangle}{p}\right)$ and $g\left(\frac{\langle A\rangle}{p}\right)$ for the \emph{almost arithmetic progressions} $A=(a,ha+d,ha+2d,...,ha+kd)$, the \emph{almost arithmetic progressions with initial gaps} $A=(a, ha+(K+1)d, ha+(K+2)d, ..., ha+kd)$, the \emph{almost arithmetic progressions with odd terms} $A=(a, ha+d, ha+3d, ha+5d, ..., ha+(2k+1)d)$ and $A=(a,ha-d,ha+d)$.

In this work we will modify the reduction method proposed in \cite{Fliuxin23} by authors. As a well-known result of Lemma  \ref{LiuXin001}, once a Ap\'ery set (see Section 2) of numerical semigroup is obtained, the Frobenius number and genus are easy to calculate. We reduce the computation of the elements in the Ap\'ery set of the quotient of a numerical semigroup by the positive integer $p$ to a simple minimization problem $O_B(M)$. By calculating $O_B(M)$, we can obtain an explicit formulas for the elements in the Ap\'ery set. The first main result obtained by this method is as follows:
\begin{thm}\label{FNgenusD}
Let $A=(a, dB)=(a, db_1, db_2, ..., db_k)$, $d,p\in \mathbb{P}$, $p\mid a$ and $\gcd(A)=1$. Then:
\begin{align*}
&F\left(\frac{\langle a,db_1,db_2,...,db_k\rangle}{p}\right)=d\cdot F\left(\frac{\langle a,b_1,b_2,...,b_k\rangle}{p}\right)+\frac{a(d-1)}{p},
\\&g\left(\frac{\langle a,db_1,db_2,...,db_k\rangle}{p}\right)=d\cdot g\left(\frac{\langle a,b_1,b_2,...,b_k\rangle}{p}\right)+\frac{(a-p)(d-1)}{2p}.
\end{align*}
\end{thm}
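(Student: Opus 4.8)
The plan is to work with Apéry sets relative to the element $m:=a/p$, which lies in both quotient semigroups because $p\cdot(a/p)=a$ is a generator, and then to feed the result into the Apéry formulas of Lemma~\ref{LiuXin001}. I first record that $\gcd(A)=1$ together with $p\mid a$ forces $\gcd(a,d)=1$, hence $\gcd(m,d)=1$ and $\gcd(p,d)=1$; in particular multiplication by $d$ is a bijection of $\Z/m\Z$. Next I unwind membership: $x\in\frac{\langle a,dB\rangle}{p}$ means $px=au+d\sum_i b_iv_i$ with $u,v_i\in\N$, and writing $s=\sum_i b_iv_i$ the relation $\gcd(p,d)=1$ forces $p\mid s$ and yields $x=mu+ds/p$. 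Setting $C=\{\,s/p : s\in\langle b_1,\dots,b_k\rangle,\ p\mid s\,\}$, a submonoid of $\N$, this gives the exact decompositions $\frac{\langle a,dB\rangle}{p}=m\N+dC$ and $\frac{\langle a,B\rangle}{p}=m\N+C$, so the two quotients differ only by the scaling of $C$ by $d$. (This is the concrete form taken by the reduction to the minimization problem $O_B(M)$: passing from $B$ to $dB$ simply multiplies the objective by $d$.)

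Then I read off the two Apéry sets, indexing their elements $w'_r$ and $w_r$ by residues $r\in\{0,\dots,m-1\}$. For $\frac{\langle a,B\rangle}{p}=m\N+C$ the minimum in a class is attained with the $m\N$-part equal to $0$, so $w'_r=\min\{c\in C : c\equiv r\pmod m\}$. For $\frac{\langle a,dB\rangle}{p}=m\N+dC$ the minimal element in class $dr$ is $w_{dr}=\min\{dc : c\in C,\ dc\equiv dr\pmod m\}$, and since $\gcd(d,m)=1$ the condition $dc\equiv dr$ is equivalent to $c\equiv r$, giving $w_{dr}=d\,w'_r$. As $r$ runs through a complete residue system so does $dr\bmod m$; hence $\mathrm{Ap}\bigl(\tfrac{\langle a,dB\rangle}{p},m\bigr)=d\cdot\mathrm{Ap}\bigl(\tfrac{\langle a,B\rangle}{p},m\bigr)$ as multisets, the bijection merely permuting residue classes by multiplication by $d$.

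Finally I substitute into Lemma~\ref{LiuXin001}. The maximum of the Apéry set scales by $d$, so $F\bigl(\tfrac{\langle a,dB\rangle}{p}\bigr)=d\,\max_r w'_r-m=d\bigl(F(\tfrac{\langle a,B\rangle}{p})+m\bigr)-m=d\,F\bigl(\tfrac{\langle a,B\rangle}{p}\bigr)+(d-1)m$, and $(d-1)m=\frac{a(d-1)}{p}$. Likewise the sum of the Apéry elements scales by $d$, so $g\bigl(\tfrac{\langle a,dB\rangle}{p}\bigr)=\tfrac{d}{m}\sum_r w'_r-\tfrac{m-1}{2}=d\bigl(g(\tfrac{\langle a,B\rangle}{p})+\tfrac{m-1}{2}\bigr)-\tfrac{m-1}{2}=d\,g\bigl(\tfrac{\langle a,B\rangle}{p}\bigr)+(d-1)\tfrac{m-1}{2}$, and $(d-1)\tfrac{m-1}{2}=\frac{(a-p)(d-1)}{2p}$ since $m=a/p$. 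This gives both claimed identities.

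The step I expect to be the crux is establishing the exact decomposition $m\N+dC$ and verifying that $d\,w'_r$ is genuinely the minimal element of the larger quotient in its class, rather than merely an element of it: one must rule out that a positive contribution from the $m\N$-summand produces something smaller, and one must track residues correctly through the $\times d$ bijection. Both points rest on $\gcd(d,m)=1$ and on the membership computation being an equality, so once these are nailed down the Frobenius and genus formulas follow by the routine scaling bookkeeping above.
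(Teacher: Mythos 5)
Your proof is correct and follows essentially the same route as the paper: both arguments establish that the Ap\'ery set of $a/p$ in $\frac{\langle a,dB\rangle}{p}$ is exactly $d$ times that of $\frac{\langle a,B\rangle}{p}$, using $\gcd(a,d)=1$ so that $r\mapsto dr$ permutes the residues modulo $a/p$, and then substitute into Lemma~\ref{LiuXin001}. Your explicit decomposition $\frac{\langle a,dB\rangle}{p}=\frac{a}{p}\mathbb{N}+dC$ is just a structural repackaging of the paper's congruence-minimization identity $N_{dr,p}(a,dB)=d\cdot N_{r,p}(a,B)$; the bookkeeping for $F$ and $g$ is identical.
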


Next, we obtain the formulas $F\left(\frac{\langle A\rangle}{p}\right),g\left(\frac{\langle A\rangle}{p}\right)$ of the almost arithmetic progressions $A$.
\begin{thm}\label{Arith-ahd-p}
Let $A=(a, ha+d, ha+2d, ..., ha+kd)$, $\gcd(A)=1$,  $a,h,d,k,p\in \mathbb{P}$ and $1\leq k\leq a-1$. If $p\mid a$, then
\begin{align*}
F\left(\frac{\langle A\rangle}{p}\right)&=\left\lceil \frac{a-p}{k}\right\rceil\cdot \frac{ha}{p}+\frac{a(d-1)}{p}-d,
\\g\left(\frac{\langle A\rangle}{p}\right)&=h\cdot \sum_{r=1}^{\frac{a}{p}-1}\left\lceil\frac{rp}{k}\right\rceil+\frac{(a-p)(d-1)}{2p}.
\end{align*}
\end{thm}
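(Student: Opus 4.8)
The plan is to compute the Apéry set $\mathrm{Ap}\bigl(\tfrac{\langle A\rangle}{p},\tfrac{a}{p}\bigr)$ explicitly and then read off both invariants via Lemma \ref{LiuXin001}. First I would record the shape of the ambient semigroup: every element of $\langle A\rangle$ can be written as $at+dJ$, where $M=\sum_{i=1}^{k}y_i$ counts the generators other than $a$, where $t=y_0+hM\ge hM$, and where $J=\sum_{i=1}^{k}iy_i$ ranges over exactly the integers with $M\le J\le kM$ (and $J=0$ when $M=0$). Consequently, membership $px\in\langle A\rangle$ is equivalent to the existence of such a triple $(M,t,J)$ with $px=at+dJ$.

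Next I would exploit $p\mid a$ together with $\gcd(a,d)=1$ (which is forced by $\gcd(A)=1$, since $\gcd(a,ha+id)=\gcd(a,d)$ for all $i$), so that $\gcd(p,d)=1$. Writing $a=p\cdot\tfrac{a}{p}$, the relation $dJ=p\bigl(x-\tfrac{a}{p}t\bigr)$ forces $p\mid J$; putting $J=pj$ gives $x=\tfrac{a}{p}t+dj$ with $x\equiv dj\pmod{a/p}$. For a fixed $d$-coefficient $j$, minimizing $x$ amounts to minimizing $t$, hence to using the fewest generators $M$ that realize $J=pj$; this is precisely the minimization problem $O_B(M)$ supplied by the reduction method of \cite{Fliuxin23}, whose value here is $M=\lceil pj/k\rceil$. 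This yields minimal $t=h\lceil pj/k\rceil$ and minimal value $E(j):=\tfrac{a}{p}h\lceil pj/k\rceil+dj$.

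Then I would assemble the Apéry set. Since $\gcd\bigl(d,\tfrac{a}{p}\bigr)=1$, the residues $dj\bmod\tfrac{a}{p}$ for $j=0,1,\dots,\tfrac{a}{p}-1$ are pairwise distinct and exhaust all classes, while $E(j)$ is strictly increasing in $j$; hence the least element of the quotient in each class is the corresponding $E(j)$, and $\mathrm{Ap}=\{E(j):0\le j\le\tfrac{a}{p}-1\}$. Because $E$ is increasing, its maximum is $E\bigl(\tfrac{a}{p}-1\bigr)=\tfrac{ha}{p}\lceil\tfrac{a-p}{k}\rceil+d\bigl(\tfrac{a}{p}-1\bigr)$, and subtracting $\tfrac{a}{p}$ gives the claimed $F$. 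For the genus I would apply the Selmer-type formula of Lemma \ref{LiuXin001}, splitting $\sum_{j=0}^{a/p-1}E(j)$ into $\tfrac{a}{p}h\sum_{j=1}^{a/p-1}\lceil pj/k\rceil$ plus $d\cdot\tfrac{(a/p-1)(a/p)}{2}$; dividing by $\tfrac{a}{p}$ and subtracting $\tfrac{a/p-1}{2}$ collapses the second piece to $\tfrac{(d-1)(a-p)}{2p}$, matching the statement.

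The main obstacle is verifying that $E(j)$ is genuinely the minimum of the quotient over its \emph{entire} residue class, not merely over representations with that particular $j$: one must argue that any $x\in\tfrac{\langle A\rangle}{p}$ with $x\equiv dj_0$ necessarily comes from some $j\equiv j_0\pmod{a/p}$ with $j\ge j_0$, and then combine this with the monotonicity of $E$ to rule out smaller candidates arising from a larger $M$ or a larger $t$. The bookkeeping around $\lceil pj/k\rceil$ — in particular checking that $M=\lceil pj/k\rceil$ always satisfies $M\le pj\le kM$, so that the decomposition of $J=pj$ into $M$ parts indeed exists — is the routine-but-delicate part, and I expect the degenerate ranges (namely $j=0$, and $p=a$ so that $\tfrac{a}{p}=1$) to require brief separate comment to keep the argument clean.
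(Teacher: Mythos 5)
Your proposal is correct and follows essentially the same route as the paper: your decomposition $x=\tfrac{a}{p}t+dj$ with minimal $t=h\lceil pj/k\rceil$ is exactly Proposition \ref{0202} specialized to $B=(1,2,\dots,k)$ with $O_B(M)=\lceil M/k\rceil$, and the ``main obstacle'' you flag (minimality over the whole residue class) is resolved precisely as in the paper by noting that $E(j)=\tfrac{ha}{p}\lceil pj/k\rceil+dj$ is increasing over all $j\ge 0$, so the candidate with $j=j_0$ (i.e.\ $m=0$) wins; the passage to $F$ and $g$ via Lemma \ref{LiuXin001} is then identical.
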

When $h=1$, this is a partial answer to Open Problem \ref{OpenProb}. The other main results of this paper are as follows:
\begin{thm}\label{FNGenus-Kkdp}
Let $A=(a, ha+(K+1)d, ha+(K+2)d, ..., ha+kd)$, where $\gcd(A)=1$, $a, h, d, K, k, p\in \mathbb{P}$, $K\leq \frac{k-1}{2}$ and $a\geq 2$. If $p\mid a$, then
$$\begin{aligned}
F\left(\frac{\langle A\rangle}{p}\right)&=\left\{
\begin{aligned}
&\left\lceil \frac{a-p}{k}\right\rceil \cdot\frac{ha}{p}+\frac{a(d-1)}{p}-d\ & \text{if} &\ \ p>K, \\
&\left\lceil \frac{a+\left\lfloor\frac{K}{p}\right\rfloor\cdot p}{k}\right\rceil\cdot \frac{ha}{p}+\frac{a(d-1)}{p}+\left\lfloor\frac{K}{p}\right\rfloor\cdot d\ & \text{if} &\ \ p\leq K. \\
\end{aligned}
\right.
\\g\left(\frac{\langle A\rangle}{p}\right)&=\left\{
\begin{aligned}
&h\cdot \sum_{r=1}^{\frac{a}{p}-1}\left\lceil\frac{rp}{k}\right\rceil+\frac{(a-p)(d-1)}{2p}\ & \text{if} &\ \ p>K, \\
&h\cdot \left(\sum_{r=1}^{\left\lfloor\frac{K}{p}\right\rfloor}\left\lceil \frac{a+rp}{k}\right\rceil+ \sum_{r=\left\lfloor\frac{K}{p}\right\rfloor+1}^{\frac{a}{p}-1}\left\lceil\frac{rp}{k}\right\rceil\right)
+\frac{(a-p)(d-1)}{2p}+\left\lfloor\frac{K}{p}\right\rfloor\cdot d\ & \text{if} &\ \ p\leq K. \\
\end{aligned}
\right.
\end{aligned}$$
\end{thm}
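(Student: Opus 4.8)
The plan is to compute the Apéry set $\mathrm{Ap}\!\left(\frac{\langle A\rangle}{p},\frac{a}{p}\right)$ and then read off $F$ and $g$ via Lemma~\ref{LiuXin001}. Since $\gcd(A)=1$ forces $\gcd(a,d)=1$, and $p\mid a$, the element $\frac{a}{p}$ lies in $\frac{\langle A\rangle}{p}$ and its Apéry set has exactly $\frac{a}{p}$ elements, one per residue class modulo $\frac{a}{p}$. The first step is to characterize membership: $x\in\frac{\langle A\rangle}{p}$ iff $px=a y_0+\sum_{j=K+1}^{k}(ha+jd)y_j$ for some $y_0,y_j\in\mathbb{N}$, which I rewrite as $px=a(y_0+hY)+dM$ with $Y=\sum_j y_j$ and $M=\sum_j j y_j$. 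Because $\gcd(d,p)=1$, integrality of $x$ forces $p\mid M$; writing $M=pM'$ gives $x=\frac{a}{p}(y_0+hY)+dM'$, so $x\equiv dM'\pmod{\frac{a}{p}}$. As $\gcd(d,\frac{a}{p})=1$, the assignment $M'\bmod\frac{a}{p}\mapsto dM'\bmod\frac{a}{p}$ is a bijection on residue classes, so I may index the Apéry elements by $m=M'\bmod\frac{a}{p}$.

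Next I reduce each Apéry element to the one-variable minimization $O_B(M)$. For a fixed class $m$, the minimal $x$ is obtained by taking $y_0=0$ and minimizing $x=\frac{ha}{p}\,Y+dM'$ over all representable $M'\equiv m\pmod{\frac{a}{p}}$, where $Y$ is the least number of parts from $\{K+1,\dots,k\}$ (as a weighted sum $\sum_j j y_j=pM'$). The key combinatorial fact is that with exactly $t$ parts the attainable sums fill the integer interval $[t(K+1),tk]$, and the hypothesis $K\le\frac{k-1}{2}$ guarantees $k-K-1\ge K$, so consecutive intervals overlap and their union is all of $[K+1,\infty)$ without gaps. Hence $Y_{\min}(M)=\lceil M/k\rceil$ for every $M\ge K+1$, while $1\le M\le K$ is unrepresentable. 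Since $x$ is increasing in $M'$ (both $\lceil pM'/k\rceil$ and $dM'$ grow), the minimizer is the smallest representable $M'$ in the class, giving $w_m=\frac{ha}{p}\lceil pM'/k\rceil+dM'$ for that $M'$.

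The case split is then forced by when the naive choice $M'=m$ is representable. If $p>K$, then for every $m\ge 1$ one has $pm\ge p>K$, so $M'=m$ is already representable and $w_m=\frac{ha}{p}\lceil pm/k\rceil+dm$; this is exactly the Apéry set of the full progression, so the formulas coincide with Theorem~\ref{Arith-ahd-p}. If $p\le K$, set $L=\lfloor K/p\rfloor\ge 1$; for $1\le m\le L$ the sum $pm\le pL\le K$ is unrepresentable, so I shift to $M'=m+\frac{a}{p}$ (representable since $pm+a\ge K+1$), yielding $w_m=\frac{ha}{p}\lceil(a+pm)/k\rceil+dm+\frac{da}{p}$, whereas for $L<m\le\frac{a}{p}-1$ the unshifted value stands. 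Feeding these into Lemma~\ref{LiuXin001}, the genus is a direct summation producing the two $\lceil\cdot\rceil$-sums plus the term $\lfloor K/p\rfloor\cdot d$ coming from the $\frac{da}{p}$ shifts. For the Frobenius number I locate the largest Apéry element: both the shifted family (maximal at $m=L$) and the unshifted family (maximal at $m=\frac{a}{p}-1$) are increasing in $m$, and comparing the two shows the shifted value exceeds the unshifted one by $d(L+1)$ in the linear part plus a nonnegative ceiling difference, so the maximum is $w_L^{\mathrm{shift}}$ and $F=w_L^{\mathrm{shift}}-\frac{a}{p}$.

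I expect the main obstacle to be the representability analysis underlying $O_B(M)$: proving that the minimal number of parts is exactly $\lceil M/k\rceil$ for $M\ge K+1$ and that $\{1,\dots,K\}$ is precisely the unrepresentable range, both of which hinge delicately on $K\le\frac{k-1}{2}$ to exclude gaps between the blocks $[t(K+1),tk]$. A secondary technical point is verifying that the shifted representative $m+\frac{a}{p}$ is itself representable (i.e.\ $pm+a\ge K+1$) and confirming that the shifted value at $m=L$ dominates all unshifted values, since an error here would change the maximum and hence $F$. Once these are settled, the passage from the Apéry set to $F$ and $g$ is routine bookkeeping.
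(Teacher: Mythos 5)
Your proposal is correct and follows essentially the same route as the paper: reduce to the Ap\'ery set of $\frac{a}{p}$ via the minimization $O_B(M)$, show $O_B(M)=\lceil M/k\rceil$ for $M\geq K+1$ with $\{1,\dots,K\}$ unrepresentable (using $K\leq\frac{k-1}{2}$), shift the residues $r$ with $rp\leq K$ by one copy of $a$, and then apply Lemma~\ref{LiuXin001}. The only cosmetic difference is that you establish representability via the overlapping intervals $[t(K+1),tk]$ where the paper exhibits explicit minimizing solutions, and you re-derive the reduction of Proposition~\ref{0202} inline rather than citing it.
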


\begin{thm}\label{wehaha}
Let $A=(a, ha-d, ha+d)$,  $d, h, p\in \mathbb{P}$, $\gcd(a, d)=1$, $\gcd(A)=1$, $ha-d>1$ and $s=\left\lfloor \frac{ha-d}{2hp}\right\rfloor$. If $p\mid a$, then
\begin{align*}
F\left(\frac{\langle A\rangle}{p}\right)&=\max\left\{\left\lfloor\frac{ha-d}{2hp}\right\rfloor(ha+d)-\frac{a}{p}, \left(\frac{a}{p}-\left\lceil \frac{ha-d}{2hp}\right\rceil\right)(ha-d)-\frac{a}{p}\right\},\\
g\left(\frac{\langle A\rangle}{p}\right)&=\frac{(ha+d)ps(s+1)}{2a}+\frac{(ha-d)(a-sp)
(a-sp-p)}{2pa}-\frac{a-p}{2p}.
\end{align*}
\end{thm}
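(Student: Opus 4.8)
The plan is to apply the paper's reduction method to compute the Apéry set $\mathrm{Ap}\!\left(\frac{\langle A\rangle}{p},\frac{a}{p}\right)$ and then extract $F$ and $g$ from Lemma \ref{LiuXin001}. Write a generic element of $\langle A\rangle$ as $a x_0 + (ha-d)x_1 + (ha+d)x_2$ with $x_i \in \mathbb{N}$, and set $M = x_1 + x_2$, $j = x_2 - x_1$, so that $(ha-d)x_1 + (ha+d)x_2 = haM + dj$ with $|j| \le M$ and $j \equiv M \pmod 2$. An element $w$ lies in $\frac{\langle A\rangle}{p}$ precisely when $pw = a x_0 + haM + dj$ is solvable, and for the Apéry set one must minimize $w$ within each residue class modulo $\frac{a}{p}$.

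First I would use the divisibility hypotheses to simplify the congruence. Since $p \mid a$ we have $p \mid haM$, so $pw \in \langle A\rangle$ forces $p \mid dj$; and because $\gcd(a,d)=1$ together with $p \mid a$ gives $\gcd(p,d)=1$, this is equivalent to $p \mid j$, say $j = pt$. Substituting, $w = \frac{a}{p}(x_0 + hM) + dt$, so $w \equiv dt \pmod{\frac{a}{p}}$. For fixed $t$, minimizing $w$ forces $x_0 = 0$ and the least admissible $M = p|t|$ (which respects the parity constraint $M \equiv pt \pmod 2$), yielding the candidate value $ha|t| + dt$. This is exactly the output of the minimization problem $O_B(M)$ in this setting, and it equals $(ha+d)\,t$ for $t \ge 0$ and $(ha-d)\,|t|$ for $t \le 0$.

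Next I would parametrize the residue classes. Because $\gcd\!\left(\frac{a}{p},d\right)=1$, as $t$ runs over a residue class $t \equiv i \pmod{\frac{a}{p}}$ the value $dt$ runs over a fixed class mod $\frac{a}{p}$, and the piecewise-linear cost $ha|t|+dt$ (nonnegative and increasing away from $t=0$ on both sides, using $ha-d>0$) is minimized at the two representatives closest to $0$: namely $t=i\ge 0$ and $t=i-\frac{a}{p}<0$. Letting $i$ range over $\{0,1,\dots,\frac{a}{p}-1\}$ therefore gives the Apéry set
\begin{equation*}
\mathrm{Ap}\!\left(\tfrac{\langle A\rangle}{p},\tfrac{a}{p}\right)=\left\{\,\min\!\left\{(ha+d)\,i,\ (ha-d)\!\left(\tfrac{a}{p}-i\right)\right\}:0\le i\le \tfrac{a}{p}-1\,\right\}.
\end{equation*}

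Finally I would read off the invariants. The function $f(i)=(ha+d)i$ is increasing and $g(i)=(ha-d)(\frac{a}{p}-i)$ is decreasing, meeting at $i^\ast=\frac{ha-d}{2hp}$, so their pointwise minimum is maximized at $i\in\{s,s+1\}$ with $s=\lfloor i^\ast\rfloor$; evaluating gives $\max_i w_i=\max\{(ha+d)s,\ (ha-d)(\frac{a}{p}-s-1)\}$, and subtracting $\frac{a}{p}$ per Lemma \ref{LiuXin001} produces the stated Frobenius formula, the ceiling $\lceil i^\ast\rceil$ packaging both the case where $i^\ast$ is an integer and where it is not. For the genus I would split the sum at $i=s$: the elements with $i\le s$ contribute $(ha+d)\sum_{i=0}^{s}i$ and those with $i\ge s+1$ contribute $(ha-d)\sum_{i=s+1}^{a/p-1}(\frac{a}{p}-i)$; applying $g=\frac{p}{a}\sum_i w_i-\frac{a-p}{2p}$ and rewriting $\frac{a}{p}-s=\frac{a-sp}{p}$ yields the claimed expression. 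The main obstacle is the first reduction step, namely justifying rigorously that $x_0=0$ and $M=p|t|$ are simultaneously optimal and feasible (the parity constraint) so that the minimization collapses to the clean one-parameter family in $t$; the subsequent maximization and summation are routine once the Apéry set is in the displayed form.
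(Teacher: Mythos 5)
Your proposal is correct and follows essentially the same route as the paper: both reduce the Ap\'ery set element to $\min\{(ha+d)i,\ (ha-d)(\tfrac{a}{p}-i)\}$ (your parameter $t$ with its two representatives nearest $0$ is exactly the paper's $m=0$ and $m=-1$ cases of the $O_B$ minimization) and then apply Lemma \ref{LiuXin001}. Your explicit treatment of the parity constraint and the split of the genus sum at $i=s$ supply details the paper leaves to the reader, but the argument is the same.
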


\begin{thm}\label{FNgenus135p}
Let $A=(a, ha+d, ha+3d, ha+5d, ..., ha+(2k+1)d)$,  where $\gcd(A)=1$,  $a, h, d, k, p\in \mathbb{P}$, $a>2$, $3\leq 2k+1\leq a-1$. And let $a-p=(2k+1)s^{\prime}+t^{\prime}$,  where $1\leq t^{\prime}\leq 2k+1$.
If $p\mid a$ and $t^{\prime}\equiv 0\mod2$,  then we have
$$F\left(\frac{\langle A\rangle}{p}\right)=\frac{1}{p}\left(ha\left(\left\lfloor \frac{a-p-1}{2k+1}\right\rfloor +2\right)+(a-p)d-a\right).$$
\end{thm}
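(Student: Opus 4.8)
The plan is to determine the Ap\'ery set $\mathrm{Ap}\!\left(\frac{\langle A\rangle}{p}, \frac{a}{p}\right)$ and then apply Lemma \ref{LiuXin001}. Since $p\mid a$ we have $\frac{a}{p}\in\frac{\langle A\rangle}{p}$, so this set contains exactly $\frac{a}{p}$ elements, one in each residue class modulo $\frac{a}{p}$. First I would write a general $x\in\frac{\langle A\rangle}{p}$ through $px=ax_0+\sum_{j=0}^{k}(ha+(2j+1)d)c_j$ and collect terms as $px=a(x_0+hM)+dN$, where $M=\sum_j c_j$ and $N=\sum_j(2j+1)c_j$. Because $p\mid a$ together with $\gcd(a,d)=1$ forces $\gcd(p,d)=1$, reducing modulo $p$ gives $p\mid N$; put $n=N/p$. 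Then $x=\frac{a}{p}(x_0+hM)+dn$, the class of $x$ modulo $\frac{a}{p}$ equals $dn\bmod\frac{a}{p}$, and since $\gcd(d,\frac{a}{p})=1$ the assignment $n_0\mapsto dn_0\bmod\frac{a}{p}$ is a bijection from $\{0,1,\dots,\frac{a}{p}-1\}$ onto the residue classes.

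Next I would carry out the reduction to the minimization problem $O_B(M)$ of \cite{Fliuxin23}: for a fixed class I set $x_0=0$ and minimise $\frac{a}{p}hM+dn$ over all admissible $(c_j)$. Raising $n$ by $\frac{a}{p}$ increases $dn$ by $\frac{da}{p}$ while (using $2k+1<a$) it cannot decrease the least admissible $M$, so the minimum in the class indexed by $n_0$ is attained at $n=n_0$; hence the Ap\'ery element is $w(n_0)=\frac{a}{p}h\,\mu(n_0)+dn_0$, where $\mu(n_0)$ is the least $M=\sum_j c_j$ subject to $\sum_j(2j+1)c_j=pn_0$. This is exactly where the odd-term structure is used: writing $pn_0$ as a sum of $M$ odd numbers drawn from $\{1,3,\dots,2k+1\}$ is possible precisely when $\lceil\frac{pn_0}{2k+1}\rceil\le M\le pn_0$ and $M\equiv pn_0\pmod 2$, so $\mu(n_0)=\lceil\frac{pn_0}{2k+1}\rceil$ when this ceiling has the parity of $pn_0$, and $\mu(n_0)=\lceil\frac{pn_0}{2k+1}\rceil+1$ otherwise.

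By Lemma \ref{LiuXin001} it remains to maximise $w(n_0)$ over $n_0\in\{0,\dots,\frac{a}{p}-1\}$, and this is where the hypothesis $t'\equiv0\pmod2$ becomes decisive. Writing $a-p=(2k+1)s'+t'$ with $t'$ even gives $\lceil\frac{a-p}{2k+1}\rceil=s'+1$ while $a-p\equiv s'\pmod2$, so the parity correction forces $\mu\!\left(\frac{a}{p}-1\right)=s'+2$; at the same time the uniform estimate $\mu(n_0)\le\lceil\frac{pn_0}{2k+1}\rceil+1\le s'+2$ holds for every $n_0\le\frac{a}{p}-1$. Thus the top class $n_0=\frac{a}{p}-1$ attains the maximal value $\mu=s'+2$, it carries the largest $dn_0$ among all classes attaining this value, and every class with $\mu\le s'+1$ trails by at least $\frac{a}{p}h$; hence the maximum is $w\!\left(\frac{a}{p}-1\right)=\frac{a}{p}h(s'+2)+d\!\left(\frac{a}{p}-1\right)$. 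Since $t'\ge1$ gives $s'=\lfloor\frac{a-p-1}{2k+1}\rfloor$, subtracting $\frac{a}{p}$ yields the claimed formula after routine simplification. I expect the main obstacle to be the parity bookkeeping: pinning down the feasibility-plus-parity description of $\mu(n_0)$, and then using $t'$ even to prove that the extreme class $n_0=\frac{a}{p}-1$ is the maximiser rather than some interior class; once these are in hand the rest is mechanical.
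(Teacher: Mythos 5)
Your proposal is correct and takes essentially the same route as the paper: you re-derive the reduction of Proposition \ref{0202} to the minimization problem $O_B$, evaluate it via the parity constraint on sums of odd parts from $\{1,3,\dots,2k+1\}$, check monotonicity in the auxiliary parameter so the Ap\'ery element is attained at the smallest representative, and use the evenness of $t'$ to place the maximum at the top residue class $n_0=\frac{a}{p}-1$. The only cosmetic difference is that you prove the reduction from scratch rather than citing Proposition \ref{0202}, and you phrase $O_B$ through the feasibility-plus-parity characterization rather than the paper's case split on $t$.
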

When $t^{\prime}$ is odd, the case is complicated. It is difficult to obtain a closed formula.

This paper is organized as follows.
In Section 2, we will provide a detailed introduction to our method about a reduction of Ap\'ery set of the numerical semigroup $\frac{\langle A\rangle}{p}$. We will give a proof of Theorem \ref{FNgenusD}.
In Section 3, we will provide the proofs of Theorem \ref{Arith-ahd-p}, \ref{FNGenus-Kkdp}, \ref{wehaha} and \ref{FNgenus135p}.

\section{A Reduction of Ap\'ery Set}

Suppose $A=(a_1,a_2,...,a_n)$, $a_i\in \mathbb{P}, 1\leq i\leq n$ and $\gcd(A)=1$. We know that the set
$$\langle A\rangle=\left\{a_1x_1+a_2x_2+\cdots +a_nx_n\ \mid  x_i\in \mathbb{N}\right\}$$
is a numerical semigroup. Let $a \in \langle A\rangle\backslash \{0\}$. The \emph{Ap\'ery set} of $a$ in $\langle A\rangle$ is $Ap(A,a)=\{w\in \langle A\rangle \mid w-a\notin \langle A\rangle\}$. In \cite{J.C.Rosales}, we can obtain
$$Ap(A,a)=\{N_0,N_1,N_2,...,N_{a-1}\},$$
where $N_r=\min\{ a_0\mid a_0\equiv r\mod a, \ a_0\in \langle A\rangle\}$, $0\leq r\leq a-1$. We usually take $a:=a_1$.

The following results were obtained by A. Brauer and J. E. Shockley \cite{J. E. Shockley} and E. S. Selmer \cite{E. S. Selmer} respectively. Then the Frobenius and genus are easy to compute.
\begin{lem}[\cite{J. E. Shockley}, \cite{E. S. Selmer}]\label{LiuXin001}
Suppose $A=(a_1,a_2,...,a_n)$ and $\gcd(A)=1$. The \emph{Ap\'ery set} of $a$ in $\langle A\rangle$ is $Ap(A,a)=\{N_0,N_1,N_2,...,N_{a-1}\}$. Then the Frobenius number and genus of $A$ are respectively:
\begin{align*}
F(A)=\max_{r\in \lbrace 0, 1, ..., a-1\rbrace}\{N_r\} -a,\ \ \ \ \ \
g(A)=\frac{1}{a}\sum_{r=1}^{a-1}N_r-\frac{a-1}{2}.
\end{align*}
\end{lem}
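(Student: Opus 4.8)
The plan is to exploit the defining minimality of the Ap\'ery elements $N_r$ in order to describe, one residue class modulo $a$ at a time, exactly which nonnegative integers belong to $\langle A\rangle$ and which are gaps; both formulas then follow by simple bookkeeping. This is the classical Brauer--Shockley argument, so the only real content lies in one structural observation.

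First I would record the structural fact that for each $r\in\{0,1,\dots,a-1\}$ the set of semigroup elements congruent to $r$ modulo $a$ is precisely $\{N_r+ka \mid k\in\mathbb{N}\}$. The inclusion $\supseteq$ is immediate: $N_r\in\langle A\rangle$ by definition and $a\in\langle A\rangle$, so $N_r+ka\in\langle A\rangle$ by closure under addition. For $\subseteq$, if $x\in\langle A\rangle$ with $x\equiv r\pmod a$, then $x\geq N_r$ by the minimality in the definition of $N_r$, and $x-N_r$ is a nonnegative multiple of $a$, whence $x=N_r+ka$ for some $k\geq 0$. Writing $N_r=r+m_r a$ with $m_r\in\mathbb{N}$, this shows that within the class $r$ the integers $r,r+a,\dots,r+(m_r-1)a$ all lie below $N_r$ and are therefore gaps, while everything from $N_r$ upward belongs to $\langle A\rangle$; so the class contributes exactly $m_r$ gaps. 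I would also note $N_0=0$, since $0\in\langle A\rangle$ is the least nonnegative member of its class.

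Next I would derive the Frobenius formula. Since the gaps in class $r$ are confined to $r,r+a,\dots,N_r-a$, the largest gap in that class is $N_r-a$ whenever $N_r>r$ (and the class is gap-free when $N_r=r$). As the Frobenius number is the maximum gap over all classes, it equals $\max_r(N_r-a)=\max_r N_r-a$, the maximum being attained in a class with $r\neq 0$ because $N_0=0$ is minimal.

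Finally, for the genus I would simply total the gaps across all classes. Since class $r$ contributes $m_r=(N_r-r)/a$ gaps, summing gives
\begin{align*}
g(A)=\sum_{r=0}^{a-1}\frac{N_r-r}{a}=\frac{1}{a}\sum_{r=0}^{a-1}N_r-\frac{1}{a}\cdot\frac{a(a-1)}{2}=\frac{1}{a}\sum_{r=1}^{a-1}N_r-\frac{a-1}{2},
\end{align*}
using $N_0=0$ in the last step. The decisive step is the structural fact of the second paragraph; once each residue class is seen to consist of an initial block of gaps followed by an arithmetic progression of semigroup elements, the two formulas are immediate, and I expect no obstacle beyond stating that characterization cleanly.
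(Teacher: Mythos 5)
Your proof is correct and is precisely the classical Brauer--Shockley/Selmer argument: the paper itself gives no proof of this lemma, citing it directly from those sources, and your residue-class decomposition (each class modulo $a$ consists of the $m_r=(N_r-r)/a$ gaps $r, r+a, \dots, N_r-a$ followed by the progression $\{N_r+ka \mid k\in\mathbb{N}\}$) is exactly the standard one. All steps check out, including the handling of $N_0=0$ and the gap count in each class, so nothing needs to be added.
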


Note that $N_0=0$ is always 0. Suppose $d\in \mathbb{P}$ and $\gcd(a,d)=1$. If $r$ takes one of the remaining classes of $a$, then $dr$ is also. Therefore we have
$$\{N_0, N_1, N_2,..., N_{a-1}\}=\{N_{d\cdot 0}, N_{d\cdot 1}, N_{d\cdot 2},..., N_{d\cdot (a-1)}\}.$$

Suppose $A=(a, ha+dB)=(a, ha+db_1, ..., ha+db_k)$, $\gcd(A)=1$. We have $\gcd(a,d)=1$. Next we study the Ap\'ery set of the quotient of a numerical semigroup by a positive integer $p$, where $p$ is a positive divisor of $a$. The Ap\'ery set can be reduced to a simple minimization problem. This reduction will facilitate the calculation of two invariants: The Frobenius number $F\left(\frac{\langle A\rangle}{p}\right)$ and the genus $g\left(\frac{\langle A\rangle}{p}\right)$. When $p$ is a divisor of $a$, some results about the Ap\'ery set also see
[Chapter 5,\cite{J.C.Rosales}].

\begin{prop}\label{0202}
Suppose $A=(a, ha+db_1, ..., ha+db_k)$,  $k, h, d\in\mathbb{P}$ and $\gcd(A)=1$, $m\in\mathbb{N}$. Let $p$ be a positive divisor of $a$, i.e., $p\mid a$. The elements of Ap\'ery set of $\frac{a}{p}$ in the numerical semigroup $\frac{\langle A\rangle}{p}$ are
\begin{equation}\label{0203}
N_{dr,p}=\min \left\{O_B(ma+rp) \cdot \frac{ha}{p}+\left(\frac{ma}{p}+r\right)d \mid m\in \mathbb{N}\right\},
\end{equation}
where $0\leq r\leq \frac{a}{p}-1$ and
$$O_B(M)=\min\left\{\sum_{i=1}^kx_i \mid \sum_{i=1}^k b_ix_i=M, \ M,x_i\in\mathbb{N}, 1\leq i\leq k\right\}.$$
\end{prop}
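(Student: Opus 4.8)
The plan is to read the Apéry set off from its standard residue-class description $Ap(S,n)=\{N_0,\dots,N_{n-1}\}$ with $N_r=\min\{s\in S:\ s\equiv r\ \bmod\ n\}$, applied to $S=\frac{\langle A\rangle}{p}$ and $n=\frac ap$. First I would check that $\frac ap\in\frac{\langle A\rangle}{p}$ (indeed $p\cdot\frac ap=a\in\langle A\rangle$), so the Apéry set has exactly $\frac ap$ elements, one per residue class modulo $\frac ap$. Since $\gcd(a,d)=1$ forces $\gcd(\frac ap,d)=1$, letting $r$ run over $0,\dots,\frac ap-1$ makes $dr$ run over all residues modulo $\frac ap$ (this is the remark preceding the statement), so it suffices to identify, for each $r$, the least $x\in\frac{\langle A\rangle}{p}$ with $x\equiv dr\ \bmod\ \frac ap$; that least element is by definition $N_{dr,p}$.

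Next I would translate membership into a representation. By definition $x\in\frac{\langle A\rangle}{p}$ means $px\in\langle A\rangle$, i.e. $px=ax_0+\sum_{i=1}^k(ha+db_i)x_i$ for some $x_0,\dots,x_k\in\mathbb N$. Writing $X=\sum_i x_i$ and $M=\sum_i b_ix_i\in\langle b_1,\dots,b_k\rangle$, this reads $x=\frac ap x_0+\frac{ha}p X+\frac{dM}p$. Using $p\mid a$ one sees $p\mid dM$, and since $\gcd(p,d)=1$ this gives $p\mid M$; reducing the displayed expression modulo $\frac ap$ (where the first two summands vanish) yields $x\equiv\frac{dM}p\ \bmod\ \frac ap$. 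Imposing $x\equiv dr$ and cancelling the unit $d$ modulo $\frac ap$ turns this into $\frac Mp\equiv r\ \bmod\ \frac ap$, equivalently $M\equiv rp\ \bmod\ a$. As $0\le rp\le a-p<a$, the admissible values of $M$ are exactly $M=ma+rp$ with $m\in\mathbb N$.

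The minimisation is then immediate. For a fixed $m$—hence a fixed $M=ma+rp$ that must lie in $\langle b_1,\dots,b_k\rangle$—the value $x=\frac ap x_0+\frac{ha}p X+d(\frac{ma}p+r)$ is smallest when $x_0=0$ and $X$ is as small as possible subject to $\sum_i b_ix_i=M$, i.e. $X=O_B(M)=O_B(ma+rp)$; this produces the element $O_B(ma+rp)\cdot\frac{ha}p+(\frac{ma}p+r)d$. Minimising over $m\in\mathbb N$ (with the convention that $O_B$ is $+\infty$ when $ma+rp\notin\langle b_1,\dots,b_k\rangle$) then gives $N_{dr,p}$, which is precisely Equation \eqref{0203}.

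The step needing the most care is the reduction of the global minimum over a residue class to the two-level minimisation ``minimise $X$ for each fixed $M$, then minimise over $m$''. I would argue this by noting that every $x$ in the class comes from some representation, hence satisfies $x\ge\frac{ha}p O_B(M)+d(\frac{ma}p+r)$ with $m=\frac{M-rp}a\in\mathbb N$, while conversely each admissible $m$ is realised by taking $x_0=0$ and an optimal choice of the $x_i$; thus the class-minimum equals the stated minimum over $m$. The only genuinely structural input is the cancellation of $d$ modulo $\frac ap$, which rests on $\gcd(d,\frac ap)=1$; the remaining points (dropping $x_0$, the $+\infty$ convention for $O_B$, and checking $m\ge 0$) are bookkeeping.
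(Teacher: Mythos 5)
Your proposal is correct and follows essentially the same route as the paper: rewrite membership in $\frac{\langle A\rangle}{p}$ via representations in $\langle A\rangle$, reduce the congruence modulo $\frac{a}{p}$ to $\sum_i b_ix_i\equiv rp \pmod a$ by cancelling the unit $d$, parametrize the admissible sums as $M=ma+rp$, and split the minimization into an inner minimization of $\sum_i x_i$ (giving $O_B(M)$) followed by a minimization over $m$. Your treatment is if anything slightly more explicit than the paper's on the points it leaves implicit (dropping the $ax_0$ term, the $+\infty$ convention when $ma+rp\notin\langle b_1,\dots,b_k\rangle$, and $m\ge 0$), but there is no substantive difference in the argument.
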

\begin{proof}
By $\gcd(A)=1$, we have $\gcd(a,d)=1$. By the definition of the Ap\'ery set, for a given $0\leq r\leq \frac{a}{p}-1$, we have
\begin{align*}
N_{dr,p}&=\min\left\{ a_0\mid a_0\equiv dr\mod \frac{a}{p};\ a_0\in \frac{\langle A\rangle}{p}\right\}
\\&=\min\left\{ a_0\mid a_0\equiv dr \mod \frac{a}{p};\ pa_0\in \langle A\rangle\right\}
\\&=\min\left\{\sum_{i=1}^k\frac{(ha+db_i)}{p}x_i\mid \sum_{i=1}^k\frac{(ha+db_i)}{p}x_i\equiv dr\mod \frac{a}{p}, \ x_i\in\mathbb{N}, 1\leq i\leq k \right\}
\\&=\min\left\{\left(\sum_{i=1}^kx_i\right)\frac{ha}{p}+\sum_{i=1}^k\frac{x_ib_id}{p} \mid \sum_{i=1}^k(ha+db_i)x_i\equiv drp \mod a, \ x_i\in\mathbb{N}, 1\leq i\leq k\right\}
\\&=\min\left\{\left(\sum_{i=1}^kx_i\right)\frac{ha}{p}+\sum_{i=1}^k\frac{x_ib_id}{p} \mid \sum_{i=1}^kdb_ix_i\equiv drp \mod a, \ x_i\in\mathbb{N}, 1\leq i\leq k\right\}
\\&=\min\left\{\left(\sum_{i=1}^kx_i\right)\frac{ha}{p}+\sum_{i=1}^k\frac{x_ib_id}{p} \mid \sum_{i=1}^kb_ix_i\equiv rp \mod a, \ x_i\in\mathbb{N}, 1\leq i\leq k\right\}
\\&=\min\left\{\left(\sum_{i=1}^kx_i\right)\frac{ha}{p}+\left(\frac{ma}{p}+r\right)d \mid \sum_{i=1}^kb_ix_i=ma+rp, \ m,x_i\in\mathbb{N}, 1\leq i\leq k\right\}.
\end{align*}
Now for fixed $m$, and hence fixed $M=ma+rp$,
$\sum_{i=1}^kx_i$ is minimized to $O_B(ma+rp)$. This completes the proof.
\end{proof}

From the proof in Proposition \ref{0202} for $h=0$, we can obtain the proof of Theorem \ref{FNgenusD}.
\begin{proof}[Proof of Theorem \ref{FNgenusD}]
For a given $0\leq r\leq \frac{a}{p}-1$, we denote $N_{dr,p}(a,dB)$ and $N_{r,p}(a,B)$ as the element of the Ap\'ery set of $\frac{\langle a,db_1,db_2,...,db_k\rangle}{p}$ and $\frac{\langle a,b_1,b_2,...,b_k\rangle}{p}$ respectively. By the proof in Proposition \ref{0202}, we have
\begin{align*}
N_{dr,p}(a,dB)&=\min \left\{\sum_{i=1}^k\frac{x_ib_id}{p}\mid \sum_{i=1}^kx_ib_id\equiv drp \mod a,  x_i\in\mathbb{N}, 1\leq i\leq k \right\}
\\&=d\cdot \min \left\{\sum_{i=1}^k\frac{x_ib_i}{p}\mid \sum_{i=1}^kx_ib_i\equiv rp \mod a, x_i\in\mathbb{N}, 1\leq i\leq k\right\}
\\&=d\cdot N_{r,p}(a,B).
\end{align*}
The second ``=" in the above equation is because $\gcd(A)=1$, $\gcd(a,d)=1$. By Lemma \ref{LiuXin001}, we have
\begin{align*}
F\left(\frac{\langle a,db_1,db_2,...,db_k\rangle}{p}\right)+\frac{a}{p}&=\max_{0\leq r\leq \frac{a}{p}-1}N_{dr,p}(a,dB)=d\cdot \max_{0\leq r\leq \frac{a}{p}-1}N_{r,p}(a,B)
\\&=d\cdot \left(F\left(\frac{\langle a,b_1,b_2,...,b_k\rangle}{p}\right)+\frac{a}{p}\right).
\end{align*}
The first part theorem then follows. For the genus, by Lemma \ref{LiuXin001}, we have
\begin{align*}
g\left(\frac{\langle a,db_1,db_2,...,db_k\rangle}{p}\right)
&=\frac{p}{a}\sum_{r=1}^{\frac{a}{p}-1}N_{dr,p}(a,dB)-\frac{a-p}{2p}
=\frac{dp}{a}\sum_{r=1}^{\frac{a}{p}-1}N_{r,p}(a,B)-\frac{a-p}{2p}
\\&=d\left(g\left(\frac{\langle a,b_1,b_2,...,b_k\rangle}{p}\right)+\frac{a-p}{2p}\right)-\frac{a-p}{2p}.
\end{align*}
This completes the proof.
\end{proof}

If $p=1$ in Theorem \ref{FNgenusD}, then we can clearly get the following corollary.
\begin{cor}[\cite{J. E. Shockley},\cite{Rodseth78}]\label{CorolFNGrnus}
Let $A=(a, dB)=(a, db_1, db_2, ..., db_k)$, $d\in \mathbb{P}$ and $\gcd(A)=1$. Then we have
\begin{align*}
&F\left( a,db_1,db_2,...,db_k\right)=d\cdot F\left( a,b_1,b_2,...,b_k\right)+a(d-1),
\\&g\left( a,db_1,db_2,...,db_k\right)=d\cdot g\left(a,b_1,b_2,...,b_k\right)+\frac{(a-1)(d-1)}{2}.
\end{align*}
\end{cor}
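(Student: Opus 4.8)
The plan is to obtain this corollary as the special case $p = 1$ of Theorem \ref{FNgenusD}, so the work reduces to verifying that the hypotheses specialize correctly and that the left-hand invariants reduce to the ordinary Frobenius number and genus. First I would check that the assumptions of Theorem \ref{FNgenusD} hold when $p = 1$: both $1 \in \mathbb{P}$ and $1 \mid a$ are automatic, while $d \in \mathbb{P}$ and $\gcd(A) = 1$ are inherited verbatim from the corollary's hypotheses. Next I would record the elementary identity $\frac{\langle A\rangle}{1} = \langle A\rangle$, which follows at once from the definition $\frac{\langle A\rangle}{p} = \{x \in \mathbb{N} \mid px \in \langle A\rangle\}$ upon setting $p = 1$. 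Consequently $F\left(\frac{\langle A\rangle}{1}\right) = F(A)$ and $g\left(\frac{\langle A\rangle}{1}\right) = g(A)$ for any generating sequence, and in particular for both $A = (a, dB)$ and $(a, B)$, so the left-hand sides of the two formulas in Theorem \ref{FNgenusD} become exactly the quantities appearing in the corollary.

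With these identifications in hand, I would simply substitute $p = 1$ into the two displayed formulas of Theorem \ref{FNgenusD}. The Frobenius formula becomes $F(a, dB) = d \cdot F(a, B) + \frac{a(d-1)}{1}$, and the genus formula becomes $g(a, dB) = d \cdot g(a, B) + \frac{(a-1)(d-1)}{2 \cdot 1}$. Simplifying the constant terms $\frac{a(d-1)}{1} = a(d-1)$ and $\frac{(a-1)(d-1)}{2 \cdot 1} = \frac{(a-1)(d-1)}{2}$ yields precisely the two asserted identities, completing the argument.

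Because the result is a direct specialization, I do not expect any genuine obstacle; the only point deserving a moment's care is the identification $\frac{\langle A\rangle}{1} = \langle A\rangle$, which is what allows the quotient invariants on the left to coincide with the classical Frobenius number and genus of the generating sequences. For completeness, I would note an alternative self-contained route that avoids citing Theorem \ref{FNgenusD}: repeat the Ap\'ery-set argument of Proposition \ref{0202} with the quotient removed, establishing $N_{dr}(a, dB) = d \cdot N_r(a, B)$ via the bijection $r \mapsto dr$ on residue classes modulo $a$ (valid since $\gcd(a, d) = 1$), and then invoke Lemma \ref{LiuXin001} to assemble the Frobenius number and genus from the Ap\'ery set. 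This reproduces the corollary without reference to Theorem \ref{FNgenusD}, but the $p = 1$ specialization is shorter and is exactly the derivation anticipated by the phrasing ``If $p = 1$ in Theorem \ref{FNgenusD}''.
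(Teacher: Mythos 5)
Your proposal is correct and matches the paper exactly: the paper obtains Corollary \ref{CorolFNGrnus} precisely by setting $p=1$ in Theorem \ref{FNgenusD}, with the identification $\frac{\langle A\rangle}{1}=\langle A\rangle$ implicit. Your additional verification of the hypotheses and the noted alternative via Proposition \ref{0202} are fine but not needed beyond the specialization.
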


We need to mention that the proof of Theorem \ref{FNgenusD} is quite simple. The proof of the Corollary \ref{CorolFNGrnus} was also collected in \cite{Ramrez Alfonsn}. But this proof process is relatively cumbersome. Moreover, we do not require $d=\gcd(db_1, db_2, ..., db_k)$.

For the convenience of the following discussion and by Proposition \ref{0202},  we can define an intermediate function with respect to $m$,  namely:
\begin{equation}\label{e-hd-11}
N_{dr,p}(m) :=O_B(ma+rp)\cdot \frac{ha}{p}+\left(\frac{ma}{p}+r\right)d,  \quad \text{where} \ A=(a, ha+dB)\ \text{and}\ \ p\mid a.
\end{equation}

Lemma \ref{0202} suggests the following strategy for $N_{dr,p}$ where
$A=(a, ha+dB)$: we first try to solve $O_B(M)$ for general $M$. If we have a formula that is nice enough,  then we can analyze $N_{dr,p}(m)$. In fact, if $N_{dr,p}(m)$ increases with $m$, then we have a formula for $N_{dr,p}=N_{dr,p}(0)$. Hence we can further obtain the formula of $F\left(\frac{\langle A\rangle}{p}\right)$ and $g\left(\frac{\langle A\rangle}{p}\right)$.

\section{The Proofs of Theorem \ref{Arith-ahd-p}, \ref{FNGenus-Kkdp}, \ref{wehaha} And \ref{FNgenus135p}}

In this section,  we will first give the proof of the formulas $F\left(\frac{\langle A\rangle}{p}\right),g\left(\frac{\langle A\rangle}{p}\right)$ of almost arithmetic sequences $A$.
The central problem is to find an explicit formula of $N_{dr,p}$.

\begin{proof}[Proof of Theorem \ref{Arith-ahd-p}]
By the Equation \eqref{e-hd-11}, we have
$$N_{dr,p}(m)=O_B(ma+rp)\cdot \frac{ha}{p}+\left(\frac{ma}{p}+r\right)d,$$
where $O_B(ma+rp)=\min\left\{\sum_{i=1}^kx_i \mid \sum_{i=1}^ki\cdot x_i=ma+rp, x_i\in \mathbb{N}\right\}$.

If $ma+rp=s\cdot k+t$ with $1\leq t\leq k$,  then $O_B(ma+rp)=s+1$,  which
minimizes at  $x_k=s, x_t=1, x_i=0, (i\neq k, t)$ when $t\neq k$,
and minimizes at $x_k=s+1, x_i=0, (i\neq k)$ when $t=k$. Therefore
$$N_{dr,p}(m)=\frac{(s+1)ha}{p}+\left(\frac{ma}{p}+r\right)d=\frac{ha}{p}\left\lceil  \frac{ma+rp}{k}\right\rceil +\left(\frac{ma}{p}+r\right)d$$
is increasing with respect to $m$ for a given $r$. It follows that
$$N_{dr,p}=N_{dr,p}(0)=\frac{ha}{p}\left\lceil  \frac{rp}{k}\right\rceil +rd.$$
This ia a explicit formula respect to $r$. Moreover the $N_{dr}$ is increasing with respect to $r$. By Lemma \ref{LiuXin001}, we have the Frobenius number
\begin{align*}
F\left(\frac{\langle A\rangle}{p}\right)=\mathop{\max}\limits_{r\in \{0, 1, ...,\frac{a}{p}-1\}} \{ N_{dr,p}\}-\frac{a}{p}=N_{d(\frac{a}{p}-1),p}-\frac{a}{p}=\frac{ha}{p}\left\lceil  \frac{a-p}{k}\right\rceil+\left(\frac{a}{p}-1\right)d-\frac{a}{p}.
\end{align*}
Similarly, we have the genus
\begin{align*}
g\left(\frac{\langle A\rangle}{p}\right)=\frac{p}{a}\cdot \sum_{r=1}^{\frac{a}{p}-1}N_{dr,p}-\frac{a-p}{2p}
=h\cdot \sum_{r=1}^{\frac{a}{p}-1}\left\lceil\frac{rp}{k}\right\rceil+\frac{(a-p)(d-1)}{2p}.
\end{align*}
This completes the proof.
\end{proof}

\begin{exa}
In Theorem \ref{Arith-ahd-p}, let $a=84,h=3,d=101,k=4$. Now, we have $A=(84,353,454,555,656)$. If $p=14$, then $F\left(\frac{\langle A\rangle}{21}\right)=823$ and $g\left(\frac{\langle A\rangle}{21}\right)=412$. If $p=21$, then $F\left(\frac{\langle A\rangle}{21}\right)=491$ and $g\left(\frac{\langle A\rangle}{21}\right)=249$.
\end{exa}

We can use the  \emph{numericalsgps GAP} package (\cite{M.Delgado}) verify the correctness for the Frobenius number and genus of above the quotient of numerical semigroup.

If $p=1$ in Theorem \ref{Arith-ahd-p}, then Frobenius number $F(A)$ and genus $g(A)$ are obtained by E. S. Selmer in \cite{E. S. Selmer}. If further $h=1$, then J. B. Roberts get $F(A)$ and $g(A)$ in \cite{Roberts1}. For $A=(a,a+d,...,a+kd)$, the $F(A)$ and $g(A)$ were first obtained by A. Brauer in \cite{A. Brauer}.

Next we consider the proof of Theorem \ref{FNGenus-Kkdp} about the almost arithmetic progressions with initial gaps.
\begin{proof}[Proof of Theorem \ref{FNGenus-Kkdp}]
By the Equation \eqref{e-hd-11}, we have
$$N_{dr,p}(m)=O_B(ma+rp)\cdot \frac{ha}{p}+\left(\frac{ma}{p}+r\right)d,$$
where $O_B(ma+rp)=\min\left\{\sum_{i=1}^{k-K}x_i \mid \sum_{i=1}^{k-K}(K+i)\cdot x_i=ma+rp, x_i\in \mathbb{N} \right\}$.

Let $ma+rp=k\cdot s+t$,  where $s\geq 0, 1\leq t\leq k$.
When $K+1\leq t\leq k$,  we have $O_B(ma+rp)=s+1$,  which minimizes at $x_{k-K}=s,  x_{t-K}=1,  x_i=0, (i\neq k-K, t-K), t\neq k$ and $x_{k-K}=s+1, x_i=0, (i\neq k-K), t=k$.
When $1\leq t\leq K$, there is no solution for $s=0$. For $s\geq 1$,  we have $ma+rp=(s-1)k+k+t$ and $O_B(M)=s+1$,  which minimizes at $x_{k-K}=s-1,  x_{k+t-2K-1}=1,  x_1=1,  x_i=0,  (i\neq 1, k+t-2K-1, k-K)$; This is because $K\leq \frac{k-1}{2}$.

Therefore, we have
$$N_{dr,p}(m)=\frac{(s+1)ha}{p}+d\left(\frac{ma}{p}+r\right)=\left\lceil \frac{ma+rp}{k}\right\rceil\cdot \frac{ha}{p}+d\left(\frac{ma}{p}+r\right),$$
which is increasing with respect to $m$. Therefore $N_{dr,p}=N_{dr,p}(0)$ for $s\neq 0$ or $K+1\leq t\leq k$ and $N_{dr,p}=N_{dr,p}(1)$ for $s=0, 1\leq t\leq K$
(note that $N_{dr,p}(0)$ does not exit in this case).

Thus we have
$$\begin{aligned}
N_{dr,p}=
\left\{
\begin{aligned}
&\frac{ha}{p}\left\lceil \frac{a+rp}{k}\right\rceil+d\left(\frac{a}{p}+r\right)\ & \text{if} & \ \ s=0, 1\leq t\leq K, \\
&\frac{ha}{p}\left\lceil \frac{rp}{k}\right\rceil+dr\ & \text{if} & \ \ \text{otherwise}. \\
\end{aligned}
\right.
\end{aligned}$$
For a given $r$, if $p>K$, then we have $N_{dr,p}=N_{dr,p}(0)=\frac{ha}{p}\lceil \frac{rp}{k}\rceil+rd$. This is increasing with respect to $r$. By Lemma \ref{LiuXin001}, we have the Frobenius number
$$F\left(\frac{\langle A\rangle}{p}\right)=N_{d\left(\frac{a}{p}-1\right),p}-\frac{a}{p}=\left\lceil \frac{a-p}{k}\right\rceil \cdot\frac{ha}{p}+\left(\frac{a}{p}-1\right)d-\frac{a}{p},$$
and the genus
$$g\left(\frac{\langle A\rangle}{p}\right)=\frac{p}{a}\sum_{r=1}^{\frac{a}{p}-1}N_{dr,p}-\frac{a-p}{2p}=h\cdot \sum_{r=1}^{\frac{a}{p}-1}\left\lceil\frac{rp}{k}\right\rceil+\frac{(a-p)(d-1)}{2p}.$$
If $p\leq K$, we assume $K=cp+c^{\prime}$, $0\leq c^{\prime}\leq p-1$. So we have $c=\left\lfloor\frac{K}{p}\right\rfloor\geq 1$. Now, the $N_{dr,p}$ is reduced to
$$\begin{aligned}
N_{dr,p}=
\left\{
\begin{aligned}
&\frac{ha}{p}\left\lceil \frac{a+rp}{k}\right\rceil+d\left(\frac{a}{p}+r\right)\ & \text{if} & \ \ 1\leq r\leq c, \\
&\frac{ha}{p}\left\lceil \frac{rp}{k}\right\rceil+dr\ & \text{if} & \ \ c+1\leq r\leq \frac{a}{p}-1. \\
\end{aligned}
\right.
\end{aligned}$$
We can obviously get
$$\mathop{\max}\limits_{r\in \{0, 1, ...,\frac{a}{p}-1\}}\{N_{dr,p}\}=\max\{N_{dc,p},N_{d(\frac{a}{p}-1),p}\}=N_{dc,p}.$$
By Lemma \ref{LiuXin001}, we have the Frobenius number
$$F\left(\frac{\langle A\rangle}{p}\right)=N_{dc,p}-\frac{a}{p}=\left\lceil \frac{a+cp}{k}\right\rceil \cdot\frac{ha}{p}+\left(\frac{a}{p}+c\right)d-\frac{a}{p},$$
and the genus
$$g\left(\frac{\langle A\rangle}{p}\right)=\frac{p}{a}\sum_{r=1}^{\frac{a}{p}-1}N_{dr,p}-\frac{a-p}{2p}=h\cdot \left(\sum_{r=1}^c \left\lceil\frac{a+rp}{k}\right\rceil +\sum_{r=c+1}^{\frac{a}{p}-1}\left\lceil\frac{rp}{k}\right\rceil\right)+\frac{(a-p)(d-1)}{2p}+dc.$$
This completes the proof by $c=\left\lfloor\frac{K}{p}\right\rfloor$.
\end{proof}

\begin{exa}
Suppose $a=86,h=5,d=9,K=2,k=6$ in Theorem \ref{FNGenus-Kkdp}. Then we have $A=(86,457,466,475,484)$. If $p=43>2=K$, then $F\left(\frac{\langle A\rangle}{43}\right)=87$ and $g\left(\frac{\langle A\rangle}{43}\right)=44$. Let $a=300,h=4,d=7,K=6,k=13$. We have $A=(300,1249,1256,1263,1270,1277,1284,1291)$.
If $p=5<6=K$, then $F\left(\frac{\langle A\rangle}{5}\right)=6127$ and $g\left(\frac{\langle A\rangle}{5}\right)=3108$.
\end{exa}

If $p=1$ in Theorem \ref{FNGenus-Kkdp}, the Frobenius number $F(A)$ and the genus $g(A)$ are obtained by authors in \cite{Fliuxin23}. Furthermore, if $p=h=1$, the $F(A)$ and $g(A)$ got by Takao Komatsu in \cite{T.Komatsu22Arx}.

\begin{proof}[Proof of Theorem \ref{wehaha}]
By Lemma \ref{0202}, we consider $O_B(M)=\min\{x_1+x_2 \mid x_2-x_1=ma+rp\}$ for $m\in \mathbb{Z}$. If $m\ge 0$,  then $x_1+x_2=2x_1+ma+rp$ minimizes to $ma+rp$ at $x_1=0$,  so that $N_{dr,p}$ minimizes to $T_1:=(ha+d)r$ at $m=0, \ x_1=0$.
If $m<0$ then $ma+rp<0$ and $x_1+x_2=2x_2-ma-rp$ minimizes to $-ma-rp$ at $x_2=0$,  so that $N_{dr,p}$ minimizes to $T_2:=\frac{(ha-d)(a-rp)}{p}$ at $x_2=0, \ m=-1$.

Solving $T_1-T_2\geq 0$ gives $r\geq s=\frac{ha-d}{2hp}$. Thus  $N_{dr}=T_2$
for $r\geq s=\frac{ha-d}{2hp}$,  and $N_{dr}=T_1$ for otherwise. Obviously $0< s< \frac{a}{p}$.    Therefore we have
$$\max_{1\leq r\leq \frac{a}{p}-1}\{N_{dr,p}\}=\max\left\{\left\lfloor\frac{ha-d}{2hp}\right\rfloor(ha+d),\left(\frac{a}{p}
-\left\lceil\frac{ha-d}{2hp}\right\rceil\right)(ha-d)\right\}.$$
By Lemma \ref{LiuXin001}, we get the formulas of Frobenius number $F\left(\frac{\langle A\rangle}{p}\right)$ and genus $g\left(\frac{\langle A\rangle}{p}\right)$.
\end{proof}

\begin{exa}
In Theorem \ref{wehaha}, let $a=1120,h=7,d=9$. We have $A=(1120,7831,7849)$. If $p=28$, then $F\left(\frac{\langle A\rangle}{28}\right)=156580$ and $g\left(\frac{\langle A\rangle}{28}\right)=78376$.
\end{exa}

\begin{proof}[Proof of Theorem \ref{FNgenus135p}]
We first consider
$$O_B(ma+rp)=\min\left\{\sum_{i=0}^kx_i \mid x_0+\sum_{i=1}^k(2i+1)\cdot x_i=ma+rp\right\}.$$
Suppose $ma+rp=(2k+1)\cdot s+t$,  where $1\leq t\leq 2k+1$.

If $t$ is even,  we have $O_B(ma+rp)=s+2$ and
$$N_{dr,p}(m)=\frac{ha(s+2)}{p}+d\left(\frac{ma}{p}+r\right)=\frac{ha}{p}\left(\left\lfloor \frac{ma+rp-1}{2k+1}\right\rfloor +2\right)+d\left(\frac{ma}{p}+r\right).$$
If $t$ is odd,  we have $O_B(M)=s+1$ and
$$N_{dr,p}(m)=\frac{ha(s+1)}{p}+d\left(\frac{ma}{p}+r\right)=\frac{ha}{p}\left(\left\lfloor \frac{ma+rp-1}{2k+1}\right\rfloor +1\right)+d\left(\frac{ma}{p}+r\right).$$
By $N_{dr,p}(m+1)-N_{dr,p}(m)\geq \frac{ha}{p}\left(\left\lfloor\frac{a}{2k+1}\right\rfloor-1\right)+ \frac{da}{p}\geq 0$,  we have
$$\begin{aligned}
N_{dr,p}=N_{dr,p}(0)=
\left\{
\begin{aligned}
&\frac{ha}{p}\left(\left\lfloor \frac{rp-1}{2k+1}\right\rfloor +2\right)+dr \ \ & \text{if}\ \ & t\ \text{is even}, \\
&\frac{ha}{p}\left(\left\lfloor \frac{rp-1}{2k+1}\right\rfloor +1\right)+dr \ \ & \text{if}\ \ & t\ \text{is odd}. \\
\end{aligned}
\right.
\end{aligned}$$
By $rp\leq a-p=(2k+1)s^{\prime}+t^{\prime}$ and Lemma \ref{LiuXin001}, if $t^{\prime}$ is even, then the Frobenius number is
$$F\left(\frac{\langle A\rangle}{p}\right)=N_{d\left(\frac{a}{p}-1\right),p}-\frac{a}{p}=\frac{1}{p}\left(ha\left(\left\lfloor \frac{a-p-1}{2k+1}\right\rfloor +2\right)+(a-p)d-a\right).$$
This completes the proof.
\end{proof}

\begin{exa}
In Theorem \ref{FNgenus135p}, let $a=33,h=4,d=5,k=2$. We have $A=(33,137,147,157)$. If $p=11$, then $F\left(\frac{\langle A\rangle}{11}\right)=79$.
\end{exa}

\section{Concluding Remark}

Let $d\in\mathbb{Z}\backslash \{0\}$. We require $ha+db_i>1$ for all $1\leq i\leq k$ if $d$ is a negative integer. Then Proposition \ref{0202} is still correct. However, the $F\left(\frac{\langle A\rangle}{p}\right)$ of almost arithmetic progressions is false. At this time, we need to consider some constraints in order to $N_{dr,p}(m+1)-N_{dr,p}(m)\geq 0$ and analysis $\max\{N_{dr,p}\}$ in the proof of Theorem \ref{Arith-ahd-p}.

\noindent
{\small \textbf{Acknowledgements:}
The authors would like to show their sincere appreciations to all suggestions improving the presentation of this paper. Particular thanks go to Professor Guoce. Xin who critically read the paper and made numerous helpful suggestions. This work was partially supported by the National Natural Science Foundation of China [12071311].

\end{document}